\date{October 16, 2012}
\newtheorem{theorem}{Theorem}
\newtheorem{lemma}[theorem]{Lemma}
\newtheorem{proposition}[theorem]{Proposition}
\newtheorem{remark}[theorem]{Remark}
\newtheorem{assumption}[theorem]{Assumption}
\newtheorem{example}[theorem]{Example}
\newcommand{\xad}{x_\alpha^\delta}
\newcommand{\xdag}{x^\dagger}
\newcommand{\range}{\mathcal{R}}
\newcommand{\nullspace}{\mathcal{N}}
\newcommand{\R}{\mathbb{R}}
\newcommand{\N}{\mathbb{N}}
\newcommand{\1}{\ell^1(\N)}
\newcommand{\2}{\ell^2(\N)}
\newcommand{\3}{\ell^\infty(\N)}
\newcommand{\M}{\mathcal{M}}
\title{\bf Convergence rates in $\mathbf{\ell^1}$-regularization if the sparsity assumption fails}
\author{{\sc Martin Burger}\thanks{Westf\"alische Wilhelms-Universit\"at M\"unster, Institut f\"ur Numerische und Angewandte Mathematik, Einsteinstr.~62, 48149 M\"unster, Germany, Email:$\;$\texttt{martin.burger\,@\,wwu.de}} , {\sc Jens Flemming}\thanks{Technische Universit\"at Chemnitz, Fakult\"at f\"ur Mathematik,  09107 Chemnitz,
Germany. Email:$\;\;$\texttt{jens.flemming\,@\,mathematik.tu-chemnitz.de,\, hofmannb\,@\,mathematik.tu-chemnitz.de}\,.} , and \,{\sc Bernd Hofmann}\footnotemark[2]}
\begin{document}

\maketitle

\begin{abstract}
Variational sparsity regularization based on $\ell^1$-norms and other nonlinear functionals has gained enormous attention recently, both with respect to its applications and its mathematical analysis. A focus in regularization theory has been to develop error estimation in terms of regularization parameter and noise strength. For this sake specific error measures such as Bregman distances and specific conditions on the solution such as source conditions or variational inequalities have been developed and used.

In this paper we provide, for a certain class of ill-posed linear operator equations, a convergence analysis that works for solutions that are not completely sparse, but have a fast decaying nonzero part. This case is not covered by standard source conditions, but surprisingly can be treated with an appropriate variational inequality. As a consequence the paper also provides the first examples where the variational inequality approach, which was often believed to be equivalent to appropriate source conditions, can indeed go farther than the latter.
\end{abstract}

\vspace{0.3cm}

{\parindent0em {\bf MSC2010 subject classification:}
65J20,} 47A52, 49N45

\vspace{0.3cm}

{\parindent0em {\bf Keywords:}
Linear ill-posed problems}, sparsity constraints, Tikhonov-type regularization, $\ell^1$-regularization, convergence rates, solution decay, variational inequalities, source conditions, distance function, regularization parameter choice.

\section{Introduction}\label{s1}
\setcounter{equation}{0}
\setcounter{theorem}{0}

Variational problems of the form
\begin{equation}
	 \frac{1}2 \Vert A x - y^\delta \Vert^2 + \alpha \Vert x \Vert_{\1}\to\min_{x\in\1}
\end{equation}
have become an important and standard tool in the regularization of operator equations $Ax=y$. In the field of compressed sensing, with a usually not injective but quite well-conditioned finite-dimensional operator $A$ the sparsity modeled via the $\ell^1$-minimization yields the appropriate prior knowledge to uniquely restore the desired solution. In inverse problems, with usually an infinite-dimensional compact operator $A$, the sparsity prior allows for stable reconstructions even in presence of noise. There is a comprehensive literature concerning the
$\ell^1$-regularization of ill-posed problems under sparsity constraints including assertions on convergence rates (cf. e.g. \cite{Daub03} and  \cite{BeBur,BurResHe,BreLor09,Gras11,GrasHaltSch08,GrasHaltSch11,Lorenz08,moeller,RamRes10,RamTe10,Ramlau08,Scherzetal09}).

A natural question arising in problems of this type is the asymptotic analysis of such variational problems as $\alpha \rightarrow 0$ respectively $y^\delta \rightarrow y$, where $y$ are the data that would be produced by an exact solution of the problem. While it is straight-forward to show convergence of subsequences in weak topologies, quantitative error estimates are much more involved. For a long time it was even an open question what are the right conditions and error measures to perform such an analysis. In the last years, following \cite{BurOsh04}, the use of Bregman distances has been evolved as a standard tool. First estimates were based on source conditions of the form (cf. \cite{BurResHe} for the $\ell^1$-case and
\cite{moeller} for further analysis in a compressed sensing framework)
\begin{equation}
	\exists w: \quad A^* w \in \partial \Vert x^\dag \Vert_{\1},
\end{equation}
where $x^\dag$ is the exact solution. Extensions to approximate source conditions (cf.~\cite{Hof06}) and a different, but seemingly equivalent approach based on variational inequalities (cf.~\cite{Fle12}) have been developed subsequently.
In the case of $\ell^1$-regularization it has been shown that source conditions are directly related to sparsity, hence error estimates have been derived with constants depending on the sparsity level.

However, one can also consider solutions which are only merely sparse, i.e. few of the components being large and the majority being small and decaying fast to zero. Such a model is actually more realistic in many cases, e.g. when applying wavelets to audio signals or natural images. It is the basis of compression algorithms that most of the coefficients are very small and can be ignored, i.e. set to zero. In inversion methods, this type of a-priori information can be analyzed for $\ell^1$-regularization with two perspectives. The first is to assume that the relevant solution is indeed sparse, i.e. we are interested in a sparse approximation $\tilde x^\dag$ to $x^\dag$. In such a case on should clearly analyze a systematic error $A(\tilde x^\dag - x^\dag)$ in addition to the usual error, which is however not straightforward under general assumptions. The second approach, which we will adopt in this paper, is to really analyze the error in approximating $x^\dag$ using a natural separation into the (few) large and the remaining small entries.

The further goal of the paper is twofold. On the one hand, we are going to derive novel convergence rates for Tikhonov regularized
solutions of linear ill-posed operator equations, where the penalty functional is the $\ell^1$-norm.
We will prove convergence rates when the exact solution
is not sparse but in $\ell^1(\N)$. Moreover, we will formulate the specific manifestations of solution
smoothness in this case, also essentially based on the decay rate of solution components.
On the other hand, we give a first example for an application of the variational inequality approach
(see for details~\cite{Flemmingbuch12,Grasm11,HKPS07,HofMat12,HofYam10}) when
neither source conditions nor approximate source conditions in the Banach space setting
(cf.~\cite[Section~3.2.2]{SKHK12}) are available. The necessity of exploiting source
conditions in the course of constructing variational inequalities for obtaining convergence rates in Tikhonov regularization was up to now considered as a weakness of the approach.

The paper is organized as follows: In Section 2 we will fix the basic problem setup, notations, and assumptions, and then proceed to an overview of smoothness conditions for proving convergence rates of variational regularizations in Section 3. In Section 4 we verify that non-sparse signals indeed fail to satisfy source conditions and even approximate source conditions. In Section 5 we derive a new variational inequality and use it to prove convergence rates for approximately sparse solutions.

\section{Problem Setting and Assumptions}\label{s2}
\setcounter{equation}{0}
\setcounter{theorem}{0}

Let $\widetilde A \in \mathcal{L}(\widetilde X,Y)$ be an injective and bounded linear operator mapping between the infinite dimensional Banach spaces $\widetilde X$ and $Y$ with norms $\|\cdot\|_{\widetilde X}$
and $\|\cdot\|_Y$ as well as with topological duals $\widetilde X^*$ and $Y^*$, respectively,
where we assume that the \emph{range} of $\widetilde A$ is \emph{not closed}. This means that we have $\range(\widetilde A) \not= \overline {\range(\widetilde A)}^{\,Y}$,  which is equivalent to the fact that the inverse
$${\widetilde A}^{-1}: \range(\widetilde A) \subset Y \to \widetilde X$$ is an unbounded linear operator and that there is no constant $0<C<\infty$ such that
$$\|\widetilde x\|_{\widetilde X} \le C\,\|\widetilde A \widetilde x\|_Y \quad \mbox{for all} \quad \widetilde x \in X.$$ Moreover, we denote by $\{u_k\}_{k \in \N} \subset \widetilde X$ a \emph{bounded Schauder basis} in $\widetilde X$. This means that there is some $K>0$ such that
\begin{equation} \label{eq:c1}
\|u_k\|_{\widetilde X} \le K \quad \mbox{for all} \quad k \in \N
\end{equation}
 and any element $\widetilde x \in \widetilde X$ can be represented as an infinite series $\widetilde x= \sum \limits _{k=1}^\infty x_k u_k$ convergent in $\widetilde X$ with uniquely determined coefficients $x_k \in \R$ in the sense of $\lim \limits _{n \to \infty} \|\widetilde x - \sum \limits _{k=1}^n x_k u_k\|_{\widetilde X}=0$. In the sequel we always consider the coefficients $x_k,\;k=1,2,...,$ as components of an infinite real sequence $x:=(x_1,x_2,...)$ and following the
 setting in \cite{Grasm09}  we assume that $L: \1 \to \widetilde X$ is the \emph{synthesis operator} defined as
$$Lx:=\sum \limits _{k=1}^\infty x_k u_k \in \widetilde X \quad \mbox{for} \quad x=(x_1,x_2,...) \in \1.$$
Evidently, $L$ is a well-defined, injective and, as one simply verifies, also bounded linear operator, i.e. $L \in \mathcal{L}(\1,\widetilde X)$.

As usual $$\|x\|_{\ell^q(\N)}:=\left( \sum \limits_{k=1}^\infty |x_k|^q\right)^{1/q} $$ describe the norms in the Banach spaces $\ell^q(\N),\;1 \le q<\infty$, and $\|x\|_{\ell^\infty(\N)}:=\sup \limits_{k \in \N} |x_k|$  the norm in  $\ell^\infty(\N)$. The same norm  $\|x\|_{c_0}:=\sup \limits_{k \in \N} |x_k|$
is used for the Banach space $c_0$ of infinite sequences tending to zero. By $\ell^0(\N)$ we denote the set of \emph{sparse} sequences, where $x_k \not=0$ only occurs for a finite number of components.

In this paper, the focus is on elements $\widetilde x =Lx \in  \widetilde X$ which correspond to sequences $x \in \1$
and we choose the Schauder basis $\{u_k\}_{k \in \N}$ such that
\begin{equation} \label{eq:c2}
\overline{\range(L)}^{\widetilde X}=\widetilde X.
\end{equation}
When setting $$X:=\1, \qquad A:=  \widetilde A \circ L \in \mathcal{L}(\1,Y),$$
noting that $A$ is also injective since $\widetilde A$ and $L$ are,
we are searching for \emph{stable approximations} to elements  $x \in \1$ satisfying the linear operator equation
\begin{equation} \label{eq:opeq}
A x \,=\,y, \qquad x \in X ,\quad y \in Y,
\end{equation}
in an approximate manner, where instead of the exact right-hand side $y \in \range(A)$ only \emph{noisy data} $y^\delta \in Y$ satisfying the noise model
\begin{equation}\label{eq:noise}
\|y-y^\delta\|_Y \le \delta
\end{equation}
with noise level $\delta>0$ are given.

\begin{proposition} \label{prop:ill}
Under the given setting including the conditions  (\ref{eq:c1}) and (\ref{eq:c2}) the linear operator equation (\ref{eq:opeq}) is ill-posed, i.e., we have $\range(A) \not= \overline {\range(A)}^{\,Y}$.
\end{proposition}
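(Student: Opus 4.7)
The plan is to reduce the non-closedness of $\range(A)$ to the assumed non-closedness of $\range(\widetilde A)$ by using the density condition \eqref{eq:c2} together with continuity of $\widetilde A$. Since $A = \widetilde A \circ L$, the inclusion $\range(A) \subseteq \range(\widetilde A)$ is immediate, so passing to closures in $Y$ gives $\overline{\range(A)}^Y \subseteq \overline{\range(\widetilde A)}^Y$. The point I would stress is the reverse inclusion: for any $\widetilde x \in \widetilde X$, condition \eqref{eq:c2} supplies a sequence $x^{(n)} \in \ell^1(\N)$ with $Lx^{(n)} \to \widetilde x$ in $\widetilde X$, and the continuity of $\widetilde A$ yields $A x^{(n)} = \widetilde A L x^{(n)} \to \widetilde A \widetilde x$ in $Y$. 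Therefore $\range(\widetilde A) \subseteq \overline{\range(A)}^Y$, and taking closures gives $\overline{\range(\widetilde A)}^Y \subseteq \overline{\range(A)}^Y$. Altogether,
\[
\overline{\range(A)}^Y \,=\, \overline{\range(\widetilde A)}^Y.
\]

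Now I would argue by contradiction. Suppose $\range(A) = \overline{\range(A)}^Y$. Combined with the identity just obtained and with $\range(A) \subseteq \range(\widetilde A) \subseteq \overline{\range(\widetilde A)}^Y$, this forces the chain of inclusions
\[
\range(A) \,\subseteq\, \range(\widetilde A) \,\subseteq\, \overline{\range(\widetilde A)}^Y \,=\, \overline{\range(A)}^Y \,=\, \range(A)
\]
to collapse, so $\range(\widetilde A) = \overline{\range(\widetilde A)}^Y$. This contradicts the standing assumption that the range of $\widetilde A$ is not closed, and the proposition follows.

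No real obstacle is expected here; the only subtlety is to avoid conflating $\overline{\range(A)}^Y$ with $\widetilde A(\overline{\range(L)}^{\widetilde X})$. One cannot in general pull a closure inside a bounded operator, but one \emph{can} push it outside, and this one-sided inclusion $\widetilde A(\overline{\range(L)}^{\widetilde X}) \subseteq \overline{\widetilde A(\range(L))}^Y$ is exactly what drives the argument via the sequential continuity of $\widetilde A$. Injectivity of $A$, boundedness of $L$, and the precise choice of $\ell^1(\N)$ as the preimage space play no role in this particular proof; only the density \eqref{eq:c2}, continuity of $\widetilde A$, and the non-closedness of $\range(\widetilde A)$ are needed.
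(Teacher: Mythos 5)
Your proof is correct and follows essentially the same route as the paper: both arguments use the continuity of $\widetilde A$ together with the density condition \eqref{eq:c2} to show that $\range(A)$ is dense in $\range(\widetilde A)$, and then derive a contradiction with the assumed non-closedness of $\range(\widetilde A)$. Your version merely spells out the density step and the collapsing chain of inclusions in more detail.
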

\begin{proof}
By the continuity of $\widetilde A$ and by \eqref{eq:c2} $\range(A)$ is dense in $R(\widetilde{A})$. If $\range(A)$ would
be closed then we had $\range(A)=\range(\widetilde{A})$ and hence $\range(\widetilde{A})$
would be closed, too, which contradicts our assumptions.
\end{proof}

In the sequel, let $\langle v^*,v\rangle_{B^*\times B}$ denote the dual paring of an element $v$ from the Banach
space $B$ with an element $v^*$ from its dual space $B^*$.
Furthermore, we denote by \linebreak $e_k:=(0,0,...,0,1,0,...)$, with $1$ at the $k$-th position for $k=1,2,...$,  the elements of the normalized canonical Schauder basis in $\ell^q(\N),\;1 \le q<\infty$, which is also a Schauder basis in $c_0$. That means, we find $\lim \limits_{n \to \infty}\|x-\sum \limits_{k=1}^n x_k e_k\|_{c_0}=0$ for all $x=(x_1,x_2,...) \in c_0$ and $\lim \limits_{n \to \infty}\|x-\sum \limits_{k=1}^n x_k e_k\|_{\ell^q(\N)}=0$ for all $x=(x_1,x_2,...) \in  \ell^q(\N), \;1 \le q<\infty$, but not for $q=\infty$. Moreover, we suppose that the following standing assumptions hold:

\begin{assumption} \label{ass:basic}
\begin{itemize} \item[]
\item[(a)] The element $\xdag \in \1$ solves the operator equation (\ref{eq:opeq}).
\item[(b)] We have the limit condition $\lim\limits_{k \to \infty} \|Ae_k\|_Y = 0$.
\item[(c)] For all $k\in \N$ there exist $f_k \in Y^*, \;f_k \not=0,$ such that $e_k=A^*f_k$, i.e., we have \\
 $x_k=\langle e_k,x\rangle_{\3 \times \1}=\langle f_k,Ax\rangle_{Y^* \times Y}\,$ for all $x=(x_1,x_2,...) \in \1$.
\end{itemize}
\end{assumption}

\begin{remark} \label{rem:ass}{\rm
One simply sees that Assumption~\ref{ass:basic} is appropriate for the introduced model of the linear operator equation (\ref{eq:opeq}) with injective forward operator $A=\widetilde A~\circ~L$ and its ill-posedness verified by Proposition~\ref{prop:ill}. Firstly, by item (c) the operator \linebreak $A: \1 \to Y$ is injective. Namely, we have $x_k=\langle e_k,x\rangle_{\3 \times \1}=0$ for all $k \in \N$ whenever $x=(x_1,x_2,...) \in \1$ and $Ax=0$. This yields $x=0$ and hence the injectivity of $A$.
From the injectivity of $A$, however, we have that $\xdag$ from item (a) of Assumption~\ref{ass:basic} is the uniquely determined
solution of (\ref{eq:opeq}) for given $y \in \range(A)$.
Secondly, from item (b) of Assumption~\ref{ass:basic} it follows that there is no constant $0<C<\infty$ such that $1=\|e_k\|_{\1} \le C\,\|Ae_k\|_Y$ for all $k \in \N$
and hence we have that $\range(A)$ is a non-closed subset of the Banach space $Y$. Consequently, the linear operator equation (\ref{eq:opeq}) is ill-posed under  Assumption~\ref{ass:basic} and the inverse operator $A^{-1}: \range(A) \subset Y \to \1$, which exists due to the injectivity of $A$, is an unbounded linear operator. Hence, the stable
approximate solution of (\ref{eq:opeq}) based on noisy data $y^\delta \in Y$ satisfying (\ref{eq:noise}) requires some kind of regularization.} \hfill\fbox{}
\end{remark}

Note that by the closed range theorem the range $\range(A^*)$ is a non-closed subset of $\3$, but not dense in $\3$,
since it is a subset of $c_0$, as the following proposition indicates, and $c_0$ is not dense in $\3$ with respect to
the supremum norm.

\begin{proposition} \label{pro:c0}
Under Assumption~\ref{ass:basic} we have $\overline {\range(A^*)}^{\,\3}=c_0$.
\end{proposition}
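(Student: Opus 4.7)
The plan is to prove the two inclusions $\overline{\range(A^*)}^{\,\3}\subset c_0$ and $c_0\subset\overline{\range(A^*)}^{\,\3}$ separately, using Assumption~\ref{ass:basic}(b) for the first and Assumption~\ref{ass:basic}(c) for the second.

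For the first inclusion I would fix an arbitrary $f\in Y^*$ and inspect the components of $A^*f\in\3$. By the definition of the dual pairing, the $k$-th component of $A^*f$ is $\langle A^*f,e_k\rangle_{\3\times\1}=\langle f,Ae_k\rangle_{Y^*\times Y}$, which is bounded in absolute value by $\|f\|_{Y^*}\|Ae_k\|_Y$. Assumption~\ref{ass:basic}(b) then gives $(A^*f)_k\to 0$, so $A^*f\in c_0$. Hence $\range(A^*)\subset c_0$, and since $c_0$ is a closed subspace of $\3$ with respect to the supremum norm, the closure of $\range(A^*)$ in $\3$ is still contained in $c_0$.

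For the reverse inclusion, Assumption~\ref{ass:basic}(c) tells us that each canonical unit sequence $e_k$ lies in $\range(A^*)$, via $e_k=A^*f_k$. Since $\range(A^*)$ is a linear subspace of $\3$, it contains all finite linear combinations $\sum_{k=1}^n x_k e_k$. The sequence $\{e_k\}_{k\in\N}$ is a Schauder basis of $c_0$, i.e.\ every $x\in c_0$ is the sup-norm limit of its partial sums $\sum_{k=1}^n x_k e_k$, as already noted in the excerpt. Thus the linear span of $\{e_k\}$ is dense in $c_0$ with respect to $\|\cdot\|_{\3}$, and consequently $c_0\subset\overline{\range(A^*)}^{\,\3}$.

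Neither step is likely to pose a real obstacle: the argument is essentially a bookkeeping exercise combining the duality identity from Assumption~\ref{ass:basic}(c) with the decay condition from Assumption~\ref{ass:basic}(b), plus the standard fact that the canonical basis is a Schauder basis of $c_0$. The only small subtlety worth stating explicitly is that $c_0$ is closed in $\3$, which is what makes the first inclusion survive passage to the closure.
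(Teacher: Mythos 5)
Your proposal is correct and follows essentially the same route as the paper: the first inclusion via the estimate $|[A^*f]_k|\le\|f\|_{Y^*}\|Ae_k\|_Y$ together with Assumption~\ref{ass:basic}(b), and the second via approximating $z\in c_0$ by elements $A^*\bigl(\sum_{k=1}^n z_k f_k\bigr)=\sum_{k=1}^n z_k e_k$, which is exactly the paper's choice of $w_n$. Your explicit remark that $c_0$ is closed in $\3$ is a small but welcome clarification that the paper leaves implicit.
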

\begin{proof}
First we show that $\range(A^*)\subseteq c_0$. For $w \in Y^*$ we obvioulsy have $A^*w \in \3$
and
$$|[A^*w]_k|=|\langle A^*w,e_k\rangle_{\3 \times \1}|=|\langle w,Ae_k\rangle_{Y^* \times Y}|\le \|w\|_{Y^*}\|Ae_k\|_Y.$$
Thus by Assumption~\ref{ass:basic} (b), $A^*w \in c_0$.

It remains to show that each $z \in c_0$ can be approximated by a sequence $\{A^*w_n\}_{n \in N}$
with $w_n \in Y^*$. For this purpose we define $w_n= \sum \limits_{k=1}^n z_k f_k$ with $f_k$ from Assumption~\ref{ass:basic} (c).
Then
$$\|z-A^*w_n\|_{\3}= \left\|\sum \limits_{k=n+1}^\infty z_ke_k\right\|_{\3} $$
and therefore $ \|z-A^*w_n\|_{\3}\to 0$ as $n \to \infty$, since $z \in c_0$.
\end{proof}

\begin{remark} \label{rem:additional} {\rm
An inspection of the proof shows that for Proposition~\ref{pro:c0} the condition (b) of Assumption~\ref{ass:basic} can be weakened to\\
$\hspace*{0.2cm} (b')\;\, \mbox{For all}\; k \in \N\; \mbox{we have weak convergence}\; Ae_k \rightharpoonup 0\;\mbox{in}\; Y.$

Further, item (c) of Assumption~\ref{ass:basic} implies that
\begin{equation} \label{eq:fk}
|x_k|\le \|f_k\|_{Y^*}\,\|Ax\|_Y \qquad \mbox{for all} \quad x \in \1 \quad \mbox{and} \quad k \in \N.
\end{equation}
Applying this inequality to $x=e_k$ we obtain $1\leq\|f_k\|_{Y^*}\,\|Ae_k\|_Y$ and therefore on the
one hand
\begin{equation}\label{eq:fk_lower}
\|f_k\|_{Y^*}\geq\frac{1}{\|A\|}
\end{equation}
and on the other hand by exploiting item (b)
\begin{equation}\label{eq:fk_infty}
\lim_{k\to\infty}\|f_k\|_{Y^*}\geq\lim_{k\to\infty}\frac{1}{\|Ae_k\|_Y}=\infty.
\end{equation}

If $\ell^\infty_g(\N)$ denotes the weighted $\ell^\infty$-space with positive weights $g=(1/\|f_1\|_{Y^*},1/\|f_2\|_{Y^*},...)$ and norm
$$\|x\|_{\ell^\infty_g(\N)}:= \sup \limits_{k \in \N}\,\frac{|x_k|}{\|f_k\|_{Y^*}},$$
from \eqref{eq:fk_lower} we have that
$$\|x\|_{\ell^\infty_g(\N)}\leq\|A\|\|x\|_{\3}  \qquad \mbox{for all} \quad x \in \3$$
and from (\ref{eq:fk}) that
$$\|x\|_{\ell^\infty_g(\N)}\le \|Ax\|_Y  \qquad \mbox{for all} \quad x \in \1$$
Hence the norm in $\ell^\infty_g(\N)$ is weaker than the standard supremum norm in $\3$ and we have $0<C<\infty$ such that $\|Gx\|_{\ell^\infty_g(\N)} \le C\,\|Ax\|_Y$ for all $x \in\1$, where $G$ is the embedding operator from $\1$ to $\ell^\infty_g(\N)$
the behaviour of which characterizes the nature of ill-posedness of problem (\ref{eq:opeq}), and we refer to \cite[Remark~3.5]{Lorenz08} for similar considerations in $\ell^q$-regularization. We also mention that an assumption similar to (c) also appears in \cite[Assumption~4.1(a)]{BreLor09}.} \hfill\fbox{}
\end{remark}

\begin{example}[diagonal operator] \label{ex:diag}
{\rm In order to get some more insight into details, we consider for a separable infinite dimensional  \emph{Hilbert space} $\widetilde X$  and a selected orthonormal basis $\{u_k\}_{k \in \N}$ in $\widetilde X$ the compact linear operator
$\widetilde A: \widetilde X \to \widetilde X$ with \emph{diagonal structure}. That means we have $Y=\widetilde X$ and
$\widetilde A$ possesses the singular system $\{\sigma_k,u_k,u_k\}_{k \in \N}$
such that for the decreasingly ordered sequence of positive singular values $\{\sigma_k\}_{k \in \N}$, tending to zero as $k \to \infty$, the equations $\widetilde Au_k=\sigma_k u_k$ and $\widetilde A^*u_k=\sigma_k u_k$ are valid for all $k \in N$. For
$\widetilde x= \sum \limits_{k \in N} x_k u_k$ we have the inner products $\langle \widetilde x, u_k \rangle_{\widetilde X}$ as square-summable components $x_k$ in  the infinite sequence $x=(x_1,x_2,...)$. Then the bounded linear synthesis operator \linebreak $L: X=\1 \to \widetilde X$ is the composition $L=\mathcal{U} \circ \mathcal{E}$ of the injective embedding operator  $\mathcal E$ from $\1$ to $\2$ with $\overline{\range(\mathcal E)}^{\2}=\2$ and the unitary operator $\mathcal U$, which characterizes
the isometry between $\2$ and $\widetilde X$. Hence, $\overline{\range(L)}^{\widetilde X}=\widetilde X$ and both conditions (\ref{eq:c1}) and (\ref{eq:c2}) are satisfied for that example.

 The injective linear operator $A=\widetilde A \circ L: X \to Y$ is as a composition of a bounded and a compact linear operator also compact
 and item (b) of Assumption~\ref{ass:basic} is satisfied because
of $\|Ae_k\|_{Y}= \|Ae_k\|_{\widetilde X}=\|\widetilde Au_k\|_{\widetilde X}=\sigma_k \to 0$ as $k \to \infty$. Since we have $[A^*u_k]_k=\sigma_k$ and $[A^*u_k]_m=0$ for $m \not= k$ as a consequence of
$\widetilde A^*u_k=\sigma_k u_k, \;k \in \N,$ item (c) is fulfilled with
$f_k=\frac{1}{\sigma_k} u_k,\;k \in \N,$ and $\|f_k\|_{Y^*}=\|f_k\|_{\widetilde X}=\frac{1}{\sigma_k}$
tends to infinity as $k \to \infty$.} \hfill\fbox{}
\end{example}

Our focus in on situations where we conjecture that the solutions of (\ref{eq:opeq}) are sparse, i.e. $x \in \ell^0(\N)$, or at least that the coefficients $x_k$ in $x \in \1$ are negligible for sufficiently large $k \in \N$.
Then it makes sense to use the \emph{$\ell^1$-regularization}, and the regularized solutions $\xad \in \1$ are minimizers
of the extremal problem
\begin{equation} \label{eq:Tik}
T_\alpha(x):=\frac{1}{p}\|Ax-y^\delta\|^p_Y+\alpha\,\|x\|_{\1} \to \min, \quad \mbox{subject to}\quad x \in X=\1,
\end{equation}
of Tikhonov type with regularization parameters $\alpha>0$ and a misfit norm exponent $1<p<\infty$.
Then the sublinear and continuous penalty functional $\Omega(x):=\|x\|_{\1}$ possessing finite values for all $x \in X$ is convex and lower semi-continuous. Since $X=\1$ is a \emph{non-reflexive} Banach space we need some topology $\tau_X$ in $X$ which is weaker than the canonical weak topology in $X$ in order
to ensure stabilizing properties of the regularized solutions. In other words, $\Omega$ must be a \emph{stabilizing functional} in the sense that the sublevel sets $$\mathcal{M}^\Omega(c):=\{x \in X:\,\Omega(x) \le c\}$$ are $\tau_X$-sequentially precompact subsets of $X$ for all $c \ge 0$. Since $Z:=c_0$ with $Z^*=\1$ is a separable predual Banach space of $X$, we can use the
associated weak$^*$-topology as $\tau_X$ (cf.~\cite[Remark~4.9 and Lemma~4.10]{SKHK12}). Note that $\Omega$ under
consideration here is also sequentially lower semi-continuous with respect to the weak$^*$-topology.
If the operator $A$ can be proven to transform weak$^*$-convergent sequences in $\1$ to weakly convergent sequences in the Banach space $Y$,
which will be done by Lemma~\ref{lem:star} below, then existence and stability of regularized solutions can be ensured. We refer for details to \cite[\S3]{HKPS07}  and also to \cite{BurResHe,Grasm09}.
Precisely, there even exist uniquely determined minimizers $\xad$ for all $\alpha>0$ and $y^\delta \in Y$, because the Tikhonov functional $T_\alpha$ is strictly convex due to the injectivity of $A$. Moreover, the regularized solutions $\xad$ are stable with respect to small data changes, and we have  $\xad \in  \ell^0(\N)$ for fixed $\alpha>0$.
The last fact is proven in \cite[Lemma~2.1]{Lorenz08} if $Y$ is a Hilbert space.
For Banach spaces $Y$ the proof remains the same if one observes that for each minimizer
$\xad$ there is some $\xi\in\3$ such that
$$\xi\in A^\ast\partial\left(\tfrac{1}{p}\|\cdot-y^\delta\|_Y^p\right)(A\xad)
\subseteq\range(A^\ast)\subseteq c_0
\quad\text{and}\quad
-\xi\in\partial(\alpha\|\cdot\|_{\1})(\xad).$$

\begin{lemma} \label{lem:star}
Under the assumptions stated above the operator $A: \1 \to Y$ is always
sequentially weak*-to-weak continuous, i.e., for a weakly convergent sequence $x^{(n)} \rightharpoonup^*\,
\overline x$ in $\ell^1$ we have weak convergence as $Ax^{(n)} \rightharpoonup A \overline x$ in $Y$.
\end{lemma}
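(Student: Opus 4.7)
The plan is to unpack the definition of weak-to-weak convergence and reduce everything to the key fact that $\range(A^*) \subset c_0$, which has already been established in Proposition~\ref{pro:c0}. Since $Y$ is a Banach space, showing $Ax^{(n)} \rightharpoonup A\bar x$ in $Y$ amounts to verifying $\langle w, Ax^{(n)}\rangle_{Y^*\times Y} \to \langle w, A\bar x\rangle_{Y^*\times Y}$ for every $w \in Y^*$. By the definition of the adjoint, this scalar pairing equals $\langle A^*w, x^{(n)}\rangle_{\ell^\infty(\N)\times \ell^1(\N)}$.

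The crucial observation is that weak*-convergence in $\ell^1(\N)$, understood via the predual $c_0$, means testing against elements of $c_0$ only, not against arbitrary elements of $\ell^\infty(\N)$. Hence, in order to transfer the convergence $x^{(n)} \rightharpoonup^* \bar x$ to the scalar sequence $\langle A^*w, x^{(n)}\rangle$, I first need to know that $A^*w$ actually lies in $c_0$. This is exactly the inclusion $\range(A^*) \subseteq c_0$ proved in the first part of Proposition~\ref{pro:c0} (which uses the estimate $|[A^*w]_k| \le \|w\|_{Y^*}\|Ae_k\|_Y$ and Assumption~\ref{ass:basic}(b)). No stronger density property is required; only the inclusion.

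Once $A^*w \in c_0$ is in hand, I would simply invoke the definition of weak*-convergence against the element $z := A^*w \in c_0$, giving
\begin{equation*}
\langle w, Ax^{(n)}\rangle_{Y^*\times Y}
= \langle A^*w, x^{(n)}\rangle_{c_0 \times \ell^1(\N)}
\;\longrightarrow\; \langle A^*w, \bar x\rangle_{c_0 \times \ell^1(\N)}
= \langle w, A\bar x\rangle_{Y^*\times Y},
\end{equation*}
and since $w \in Y^*$ was arbitrary, this yields $Ax^{(n)} \rightharpoonup A\bar x$ in $Y$.

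In effect, there is really no obstacle at all beyond citing Proposition~\ref{pro:c0}; the lemma is a direct consequence of the inclusion $\range(A^*)\subseteq c_0$ combined with the definition of the weak*-topology on $\ell^1(\N)$ induced by the predual $c_0$. The conceptual point worth emphasizing in the write-up is precisely why one cannot simply use $\ell^\infty(\N)$-elements as test functionals: the weak*-topology is strictly coarser than the weak topology on $\ell^1(\N)$, and it is Assumption~\ref{ass:basic}(b) that forces $A^*w$ into the smaller space $c_0$ where the pairing is compatible with weak*-convergence.
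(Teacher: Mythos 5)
Your argument is correct and is essentially identical to the paper's own proof: both reduce weak convergence of $Ax^{(n)}$ in $Y$ to the pairing $\langle A^*w, x^{(n)}\rangle$, invoke Proposition~\ref{pro:c0} to place $A^*w$ in $c_0$, and then apply the definition of weak$^*$-convergence in $\ell^1(\N)$ with predual $c_0$. Your remark that only the inclusion $\range(A^*)\subseteq c_0$ (not the density) is needed is accurate and matches how the paper uses that proposition here.
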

\begin{proof}
Since the separable Banach space $c_0$, which has the same supremum norm like $\ell^\infty$, is a predual space of $\ell^1$, i.e.~any element $x
=(x_1,x_2,...) \in \ell^1$ is a bounded linear functional on $c_0$, the weak$^*$-convergence $x^{(n)} \rightharpoonup^*\,
\overline x$ in $\ell^1$ can be written as
$$\lim \limits _{n \to \infty}\langle x^{(n)},g\rangle_{\ell^1\times c_0}= \lim \limits _{n \to \infty}\sum \limits_{k \in \N} x^{(n)}_k g_k = \sum \limits_{k \in \N}  \overline x_k g_k = \langle \overline x,g\rangle_{\ell^1\times c_0} \quad \mbox{for all}\quad g=(g_1,g_2,...) \in c_0.$$
With the bounded linear operator $A^*: Y^* \to \ell^\infty$  we can further conclude from $\mathcal{R}(A^*) \subseteq c_0$, which follows from Proposition~\ref{pro:c0} , that $A^*f \in c_0$ for all $f \in Y^*$ and that
$$\langle f,Ax^{(n)}\rangle_{Y^* \times Y}=\langle A^*f,x^{(n)}\rangle_{\ell^\infty,\ell^1}= \langle x^{(n)},A^*f\rangle_{\ell^1 \times c_0}\quad \mbox{for all}\quad f \in Y^*.$$
Hence we have
$$\lim \limits _{n \to \infty}\langle f,Ax^{(n)}\rangle_{Y^*Y}=\lim \limits _{n \to \infty}
\langle x^{(n)},A^*f\rangle_{\ell^1 \times c_0}=\langle \overline x,A^*f\rangle_{\ell^1\times c_0}=\langle f, A\overline x\rangle _{Y^* \times Y}
\quad \mbox{for all}\quad f \in Y^*,$$
which yields the weak convergence in $Y$ and completes the proof.

\end{proof}

\section{Manifestations of smoothness for convergence rates} \label{s3}
\setcounter{equation}{0}
\setcounter{theorem}{0}

It is well-known that for linear ill-posed operator equations (\ref{eq:opeq}) with $A \in \mathcal{L}(X,Y),$ formulated in Banach spaces $X$ and $Y$ with some solution $\xdag \in X$, \emph{convergence rates of regularized solutions}
\begin{equation} \label{eq:Genrates}
E(\xad,\xdag)=\mathcal{O} (\varphi(\delta)) \qquad \mbox{as} \qquad\delta \to 0
\end{equation}
evaluated by some \emph{nonnegative error measure} $E$ and
some \emph{index function} $\varphi$ require additional properties of $\xdag$ which express some kind of smoothness of the solution with respect to the forward operator $A:X \to Y$ and its adjoint $A^*:Y^* \to X^*$. We call a function
$\varphi: (0,\infty) \to (0,\infty)$ index function if it is continuous, strictly increasing and satisfies the
 limit condition $\lim \limits_{t \to +0} \varphi(t)=0$. Moreover, we denote by
$\xad$ the minimizers of
$$\frac{1}{p}\|Ax-y^\delta\|^p_Y+\alpha\,\Omega(x) \to \min, \quad \mbox{subject to}\quad x \in X,$$
for $1<p <\infty$ and some convex stabilizing penalty functional $\Omega: X \to [0,\infty]$. Then the original form of smoothness
is given by \emph{source conditions}
\begin{equation} \label{eq:benchmark}
\xi=A^*w,\qquad w \in Y^*,
\end{equation}
for subgradients $\xi \in \partial \Omega(\xdag) \subset X^*$, and the error can be measured by the Bregman distance
\begin{equation} \label{eq:Bregman}
E(\xad,\xdag):=\Omega(x)-\Omega(\xdag)-\langle \xi,x-\xdag \rangle _{X^* \times X}
\end{equation}
as introduced in \cite{BurOsh04}. Then convergence rates (\ref{eq:Genrates}) with $\varphi(t)=t$ can be derived under appropriate choices of the regularization parameter $\alpha>0$ (cf.~\cite{BurOsh04,HKPS07,Scherzetal09}).

If the subgradient $\xi \in X^*$ fails to satisfy (\ref{eq:benchmark}), then one can use \emph{approximate source conditions} and ask for the degree of violation of $\xi$ with respect to the benchmark source condition (\ref{eq:benchmark}).  This violation is, for example, expressed by properties of the strictly positive, convex and nonincreasing distance function
\begin{equation} \label{eq:distfct}
d_\xi(R):=\inf \limits _{w \in Y^*: \,\|w\|_{Y^*} \le R}\|\xi-A^*w\|_{X^*}.
\end{equation}
If the limit condition
\begin{equation} \label{eq:limit}
\lim \limits _{R \to \infty} d_\xi(R)=0
\end{equation}
holds, then one can prove convergence rates (\ref{eq:Genrates}) with $E$ from (\ref{eq:Bregman}) and $\varphi$ depending on $d_\xi$  (cf.~\cite{BoHo10,HeinHof09}, \cite[Chapter~3]{SKHK12}, and \cite[Appendix A]{HofMat12}). If, for example, the Bregman
distance is $q$-coercive with $q \ge 2$ and $1/q+1/q^*=1$, then we have
$$\varphi(t)=\left[d_\xi \left(\Phi^{-1}(t)\right)\right]^{q*},\qquad \mbox{where} \quad \Phi(R):= \frac{[d_\xi(R)]^{q^*}}{R}.$$
If, however, the distance function $d_\xi$ does not satisfy (\ref{eq:limit}), this approach fails. As mentioned in
\cite{BoHo10} such situation is only possible if the biadjoint operator $A^{**}: X^{**} \to Y^{**}$ mapping between
the bidual spaces of $X$ and $Y$ is not injective.

An alternative manifestation of solution smoothness is given by \emph{variational inequalities} (cf., e.g., \cite{HKPS07,HofYam10}), where in the sequel our focus will be on the variant formulated in Assumption~\ref{ass:vi}, originally suggested in \cite{Flemmingbuch12,Grasm10}.

\begin{assumption} \label{ass:vi}
For given nonegative error measure $E$, convex stabilizing functional $\Omega$ and $\xdag \in X$, let there exist a concave
index function $\varphi$, a set $\M \subseteq X$ with $\xdag \in \M$ and constants $\beta>0$ as well as
$C>0$ such that
\begin{equation}\label{eq:vi}
\beta\,E(x,\xdag) \le \Omega(x)-\Omega(\xdag)+ C\,\varphi \left(\|A(x-\xdag)\|_Y\right)  \qquad \mbox{for all} \qquad x \in \M.
\end{equation}
\end{assumption}

In the case of an \emph{appropriate choice of the regularization parameter} $\alpha>0$ the index function $\varphi$
in the variational inequality (\ref{eq:vi}) immediately determines the convergence rate of Tikhonov-regularized solutions $\xad$ to $\xdag$. Proofs of the assertions of the following proposition can be found in \cite[Theorem~1]{HofMat12} and \cite[Chapter~4]{Flemmingbuch12}. We also refer to \cite{AR2}.

\begin{proposition} \label{prop:basic}
Let the regularization parameter be chosen a priori as $\alpha=\alpha(\delta):=\frac{\delta^p}{\varphi(\delta)}$ or a posteriori
as $\alpha=\alpha(\delta,y^ \delta)$ according to the strong discrepancy principle
\begin{equation} \label{eq:strdis}
\tau_1 \,\delta \le \|A \xad-y^\delta\|_Y \le \tau_2 \,\delta
\end{equation}
for prescribed constants $1 \le \tau_1 \le \tau_2<\infty$.  Then under Assumption~\ref{ass:vi} we have the convergence rate (\ref{eq:Genrates})
whenever in both cases all regularized solutions
$\xad$ for sufficiently small $\delta>0$ belong to $\M$.
\end{proposition}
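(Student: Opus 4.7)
The plan is to follow the standard template for Tikhonov rates under a variational inequality, exploiting the minimizing property of $\xad$ and the concavity of $\varphi$. First I would combine the defining inequality of $\xad$,
\begin{equation*}
\tfrac{1}{p}\|A\xad-y^\delta\|_Y^p + \alpha\,\Omega(\xad) \le \tfrac{1}{p}\|A\xdag-y^\delta\|_Y^p + \alpha\,\Omega(\xdag) \le \tfrac{1}{p}\delta^p + \alpha\,\Omega(\xdag),
\end{equation*}
with Assumption~\ref{ass:vi} applied to $x=\xad\in\M$ to eliminate the penalty difference. This yields, after dividing by $\alpha$,
\begin{equation*}
\beta\,E(\xad,\xdag) \le \tfrac{1}{p\alpha}\bigl(\delta^p - \|A\xad-y^\delta\|_Y^p\bigr) + C\,\varphi\bigl(\|A(\xad-\xdag)\|_Y\bigr),
\end{equation*}
which is the central inequality driving both cases.

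For the a priori choice $\alpha(\delta)=\delta^p/\varphi(\delta)$ I would first drop the nonpositive term $-\|A\xad-y^\delta\|_Y^p$ to obtain $\tfrac{1}{p\alpha}\delta^p = \tfrac{1}{p}\varphi(\delta)$. To handle the nonlinear term, note that by the triangle inequality and \eqref{eq:noise},
\begin{equation*}
\|A(\xad-\xdag)\|_Y \le \|A\xad-y^\delta\|_Y + \delta.
\end{equation*}
Using that the a priori bound on $\tfrac{1}{p}\|A\xad-y^\delta\|_Y^p + \alpha\Omega(\xad)\le \tfrac{1}{p}\delta^p + \alpha\Omega(\xdag)$ gives $\|A\xad-y^\delta\|_Y \le (\delta^p + p\alpha\Omega(\xdag))^{1/p}$, and plugging in $\alpha=\delta^p/\varphi(\delta)$, one checks that $\|A(\xad-\xdag)\|_Y = \mathcal{O}\bigl((\delta^p/\varphi(\delta))^{1/p}\varphi(\delta)^{1/p}\bigr) \cdot \text{const}$ is of the order of $\delta$ plus a lower order term in $\varphi(\delta)^{1/p}$ times an appropriate factor; since $\varphi$ is concave and nonnegative with $\varphi(0+)=0$, $\varphi(ct)\le \max(1,c)\varphi(t)$, so $\varphi(\|A(\xad-\xdag)\|_Y) = \mathcal{O}(\varphi(\delta))$. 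Combining yields $E(\xad,\xdag)=\mathcal{O}(\varphi(\delta))$.

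For the discrepancy principle \eqref{eq:strdis} the argument is in fact simpler: one drops the (now nonnegative) correction $\tfrac{1}{p\alpha}(\delta^p-\|A\xad-y^\delta\|_Y^p)$ by using $\|A\xad-y^\delta\|_Y\ge \tau_1\delta\ge \delta$, which makes that term nonpositive, and controls the residual directly through $\|A(\xad-\xdag)\|_Y\le \|A\xad-y^\delta\|_Y + \delta \le (\tau_2+1)\delta$. Concavity and monotonicity of $\varphi$ then give $\varphi(\|A(\xad-\xdag)\|_Y)\le (\tau_2+1)\varphi(\delta)$, and the rate $E(\xad,\xdag)=\mathcal{O}(\varphi(\delta))$ follows immediately from the central inequality.

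The main subtlety is the a priori case, where one must show that $\|A(\xad-\xdag)\|_Y$ is itself of order $\delta$ (or at least of an order that makes $\varphi$ of it comparable to $\varphi(\delta)$). This requires an auxiliary bootstrapping: first derive a crude bound on $\|A\xad-y^\delta\|_Y$ from the minimizing inequality alone, then reinsert it into the variational inequality via Young's inequality (splitting $C\varphi(\|A(\xad-\xdag)\|_Y)$ suitably so that a fraction of the discrepancy term can absorb $\|A\xad-y^\delta\|_Y^p/(p\alpha)$). Concavity of $\varphi$, together with the identity $\alpha=\delta^p/\varphi(\delta)$, is exactly what makes this splitting balance. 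Once the a priori rate is in hand, the discrepancy principle case is essentially a corollary.
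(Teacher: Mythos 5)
The paper does not prove this proposition itself but defers to \cite{HofMat12} and \cite{Flemmingbuch12}; your central inequality
\begin{equation*}
\beta\,E(\xad,\xdag)\;\le\;\frac{1}{p\alpha}\bigl(\delta^p-\|A\xad-\yd\|_Y^p\bigr)+C\,\varphi\bigl(\|A(\xad-\xdag)\|_Y\bigr)
\end{equation*}
is exactly the right starting point, and your treatment of the discrepancy-principle case is complete and correct: the bracket is nonpositive because $\|A\xad-\yd\|_Y\ge\tau_1\delta\ge\delta$, the residual bound $\|A(\xad-\xdag)\|_Y\le(\tau_2+1)\delta$ follows from the triangle inequality, and $\varphi(c t)\le c\,\varphi(t)$ for $c\ge1$ (concavity plus $\varphi(0+)=0$) finishes it.

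The a priori case, however, has a genuine gap. The crude residual bound you invoke, $\|A\xad-\yd\|_Y\le(\delta^p+p\alpha\Omega(\xdag))^{1/p}$, gives with $\alpha=\delta^p/\varphi(\delta)$ only $\|A(\xad-\xdag)\|_Y=\mathcal{O}\bigl(\delta\,\varphi(\delta)^{-1/p}\bigr)$, and the multiplicative factor $\varphi(\delta)^{-1/p}$ \emph{blows up} as $\delta\to0$. Feeding this through $\varphi(ct)\le c\,\varphi(t)$ yields only $\varphi(\|A(\xad-\xdag)\|_Y)=\mathcal{O}\bigl(\varphi(\delta)^{1-1/p}\bigr)$, which is strictly worse than $\mathcal{O}(\varphi(\delta))$ since $p>1$. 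So the claim in your second paragraph that this already gives $\varphi(\|A(\xad-\xdag)\|_Y)=\mathcal{O}(\varphi(\delta))$ is false, and the ``bootstrapping via Young's inequality'' you announce in the last paragraph is precisely the missing step --- it is never carried out. A clean way to close it: write $r:=\|A\xad-\yd\|_Y$ and keep the term $-r^p/(p\alpha)$ rather than discarding it. If $r\le\delta$, then $\|A(\xad-\xdag)\|_Y\le2\delta$ and everything is immediate. If $r>\delta$, use subadditivity $\varphi(r+\delta)\le\varphi(2r)\le2\varphi(r)$ and the concavity estimate $\varphi(r)\le(r/\delta)\,\varphi(\delta)$ (valid for $r\ge\delta$ since $t\mapsto\varphi(t)/t$ is nonincreasing) to obtain
\begin{equation*}
-\frac{r^p}{p\alpha}+2C\varphi(r)\;\le\;\varphi(\delta)\left(-\frac{s^p}{p}+2Cs\right),\qquad s:=\frac{r}{\delta},
\end{equation*}
where the identity $\alpha=\delta^p/\varphi(\delta)$ was used; since $p>1$ the function $s\mapsto2Cs-s^p/p$ is bounded above on $[0,\infty)$, and the rate $E(\xad,\xdag)=\mathcal{O}(\varphi(\delta))$ follows. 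This is essentially the argument of \cite[Theorem~1]{HofMat12}.
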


\begin{remark} \label{rem:dis} {\rm
As was shown in \cite{HofMat12} the strong discrepancy principle (\ref{eq:strdis}) in Proposition~\ref{prop:basic}  can be replaced with the more traditional (sequential) discrepancy principle, where with sufficiently large $\alpha_0>0,\;0<\zeta<1$,  and $\Delta_\zeta:=\{\alpha_j:\,\alpha_j:=\zeta^j\alpha_0,\;j=1,2,... \}$ the regularization parameter $\alpha=\alpha(\delta,y^ \delta)$ is chosen as the largest parameter within $\Delta_\zeta$ such that $\|Ax_\alpha-y^\delta\|_Y \le \tau\,\delta $ for prescribed $\tau>1$.
This, however, is more of interest if the forward operator is nonlinear and duality gaps can hinder
regularization parameters $\alpha=\alpha(\delta,y^ \delta)$ to fulfil (\ref{eq:strdis}) simultaneously with
lower and upper bounds.}  \hfill\fbox{}
\end{remark}

\section{Failure of approximate source conditions} \label{s4}
\setcounter{equation}{0}
\setcounter{theorem}{0}

Now we come back to the situation $X=\1$ of $\ell^1$-regularization introduced in Section~\ref{s2} and pose the following additional assumption.

\begin{assumption} \label{ass:nonsparse}
Let the solution $\xdag=(\xdag_1,\xdag_2,...) \in \1$ of (\ref{eq:opeq}) be non-sparse, i.e., $\xdag \notin \ell^0(\N)$
and hence there is an infinite subsequence
$\{\xdag_{k_n}\not=0\}_{n=1}^\infty$ of nonzero components of $\xdag$.
\end{assumption}

\begin{lemma} \label{lem:jens}
Let Assumptions \ref{ass:basic} and \ref{ass:nonsparse} hold and
let $\xi\in \3$ be an arbitrary element of $X^*=\3$.
Then on the one hand, $d_\xi(R)\to 0$ for $R\to\infty$ if and only if $\xi\in c_0$. On the other hand,
if $s:=\limsup \limits_{k\to\infty}\vert\xi_k\vert>0$ we have $d_\xi(R)\geq s$ for
all $R\geq 0$.
\end{lemma}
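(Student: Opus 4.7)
My plan hinges entirely on Proposition~\ref{pro:c0}, which identifies $\overline{\range(A^\ast)}^{\ell^\infty(\N)}$ with $c_0$. Given the distance function $d_\xi(R)=\inf_{\|w\|_{Y^\ast}\le R}\|\xi-A^\ast w\|_{\ell^\infty(\N)}$, the limit behavior as $R\to\infty$ is exactly the distance from $\xi$ to the full range $\range(A^\ast)$ in $\ell^\infty(\N)$, so the dichotomy ``decays'' vs.\ ``bounded below by $s$'' should be read off from the position of $\xi$ relative to $c_0$. Note that Assumption~\ref{ass:nonsparse} does not actively enter the proof; the claim is purely about $\xi\in\ell^\infty(\N)$ and the structure of $\range(A^\ast)$.

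I would prove the quantitative second statement first, since it also yields the non-trivial direction of the iff. Let $w\in Y^\ast$ be arbitrary. Proposition~\ref{pro:c0} gives $A^\ast w\in c_0$, hence $[A^\ast w]_k\to 0$. Using the elementary bound
\[
\|\xi-A^\ast w\|_{\ell^\infty(\N)}=\sup_{k\in\N}|\xi_k-[A^\ast w]_k|\,\ge\,\limsup_{k\to\infty}|\xi_k-[A^\ast w]_k|\,=\,\limsup_{k\to\infty}|\xi_k|\,=\,s,
\]
where the middle equality uses that $[A^\ast w]_k\to 0$ is absorbed in the $\limsup$. Since this estimate is uniform in $w$, taking the infimum over $w\in Y^\ast$ with $\|w\|_{Y^\ast}\le R$ gives $d_\xi(R)\ge s$ for every $R\ge 0$.

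For the iff, the implication $\xi\in c_0\Rightarrow d_\xi(R)\to 0$ is immediate from Proposition~\ref{pro:c0}: there exist $w_n\in Y^\ast$ with $\|\xi-A^\ast w_n\|_{\ell^\infty(\N)}\to 0$, and by monotonicity $d_\xi(R)\le\|\xi-A^\ast w_n\|_{\ell^\infty(\N)}$ as soon as $R\ge\|w_n\|_{Y^\ast}$. For the converse $d_\xi(R)\to 0\Rightarrow\xi\in c_0$, I would argue by contrapositive: if $\xi\notin c_0$, then $s=\limsup_k|\xi_k|>0$, and the second statement already proved forces $d_\xi(R)\ge s>0$ for all $R$, contradicting $d_\xi(R)\to 0$.

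I do not expect any real obstacle here; the content is a one-step reduction to Proposition~\ref{pro:c0}. The only point worth stating carefully is the $\limsup$ inequality in the key estimate, which relies on the fact that adding a $c_0$-sequence to $\xi$ cannot alter $\limsup_k|\xi_k|$. Everything else is packaging: monotonicity of $d_\xi$ for the ``$\Leftarrow$'' and contraposition for the ``$\Rightarrow$''.
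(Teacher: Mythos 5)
Your proposal is correct and follows essentially the same route as the paper: the second part is obtained by noting that $A^\ast w\in c_0$ (the paper derives this directly from $\|Ae_k\|_Y\to 0$ along a subsequence realizing the $\limsup$, you cite Proposition~\ref{pro:c0}), and the equivalence reduces to $\overline{\range(A^\ast)}^{\,\ell^\infty(\N)}=c_0$. Your minor reorganizations --- deducing the converse implication of the iff from the quantitative bound, and observing that Assumption~\ref{ass:nonsparse} is not actually used --- are both accurate and do not change the substance of the argument.
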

\begin{proof}
By definition of the distance function, $d_\xi(R)\to 0$ for $R\to\infty$ if and only if $\xi\in\overline{\range(A^\ast)}^{\,\3}$.
Hence the first part of the assertion is a consequence of Proposition~\ref{pro:c0} where $\overline{\range(A^\ast)}^{\,\3}=c_0$ was proven.
\par
For proving the second part of the assertion we take a subsequence $\{\xi_{l_n}\}_{n\in\N}$
with \linebreak $|\xi_{l_n}|_{\3}\to s$ as $n\to\infty$ and assume $w\in Y^\ast$ with
$\|w\|_{Y^*}\leq R$. Because of
$$\vert[A^\ast w]_{l_n}\vert\leq\|w\|_{Y^\ast}\|Ae_{l_n}\|_Y\leq R\|Ae_{l_n}\|_Y\to 0 \quad \mbox{as} \quad n \to \infty,$$
we see that $\|\xi-A^\ast w\|_{\3}\geq\sup_{n\in\N}\vert\xi_{l_n}-[A^\ast w]_{l_n}\vert\geq s$.
Thus, $d_\xi(R)\geq s$  for all $R>0$. This completes the proof.
\end{proof}

\begin{proposition} \label{prop:failure}
Under the Assumptions~\ref{ass:basic} and \ref{ass:nonsparse} the benchmark source condition (\ref{eq:benchmark}) always fails. Also the method of approximate source conditions is not applicable, because we have for the corresponding distance function
\begin{equation} \label{eq:splimit}
d_\xi(R)=\inf \limits _{w \in Y^*: \,\|w\|_{Y^*} \le R}\|\xi-A^*w\|_{\3}\,\ge\,1 \qquad \mbox{for all} \qquad R>0.
\end{equation}
\end{proposition}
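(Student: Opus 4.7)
The plan is to reduce everything to the structure of the subdifferential of the $\ell^1$-norm at a non-sparse point, and then to invoke the preceding lemma and Proposition~\ref{pro:c0}. The key observation is that any subgradient $\xi\in\partial\Omega(\xdag)\subset\3$ of the $\ell^1$-norm at $\xdag=(\xdag_1,\xdag_2,\dots)$ must satisfy $\xi_k=\operatorname{sign}(\xdag_k)$ whenever $\xdag_k\neq 0$, and $|\xi_k|\leq 1$ otherwise. Under Assumption~\ref{ass:nonsparse} there exists an infinite subsequence $\{\xdag_{k_n}\}_{n\in\N}$ of nonzero components, so $|\xi_{k_n}|=1$ for all $n\in\N$.

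From this single observation both assertions follow almost immediately. First, for the benchmark source condition: the identity $|\xi_{k_n}|=1$ for infinitely many indices implies $\xi\notin c_0$. By Proposition~\ref{pro:c0}, however, $\range(A^\ast)\subseteq c_0$, so there is no $w\in Y^\ast$ with $\xi=A^\ast w$. Hence \eqref{eq:benchmark} cannot hold for any subgradient $\xi\in\partial\Omega(\xdag)$.

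Second, for the distance function bound, set $s:=\limsup_{k\to\infty}|\xi_k|$. Since $|\xi_{k_n}|=1$ for all $n\in\N$, one has $s\geq 1$. Lemma~\ref{lem:jens} then applies and gives $d_\xi(R)\geq s\geq 1$ for every $R>0$, which is precisely \eqref{eq:splimit}. In particular, the limit condition \eqref{eq:limit} fails for every admissible choice of subgradient $\xi$, so the method of approximate source conditions in its standard form cannot deliver a rate.

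There is essentially no obstacle here: once the subdifferential of $\|\cdot\|_{\1}$ is written out, non-sparsity forces infinitely many entries of $\xi$ to have modulus one, and the two parts are then direct applications of Proposition~\ref{pro:c0} and Lemma~\ref{lem:jens} respectively. The only point requiring a brief justification is the description of $\partial\|\cdot\|_{\1}(\xdag)\subset\3$, which is standard but deserves a line since $\1$ is non-reflexive and the subdifferential is computed with respect to the dual pairing $\langle\cdot,\cdot\rangle_{\3\times\1}$.
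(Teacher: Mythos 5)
Your argument is correct and follows essentially the same route as the paper: write out $\partial\|\cdot\|_{\1}(\xdag)$, observe that non-sparsity forces $|\xi_{k_n}|=1$ for infinitely many indices so that $\xi\notin c_0$ and $\limsup_{k\to\infty}|\xi_k|=1$, then conclude via Proposition~\ref{pro:c0} and Lemma~\ref{lem:jens}. The only cosmetic difference is that you deduce the failure of \eqref{eq:benchmark} directly from $\range(A^\ast)\subseteq c_0$, whereas the paper phrases it as a consequence of the lower bound on $d_\xi$; both are equivalent and equally valid.
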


\begin{proof}
As is well-known the subgradients $\xi =(\xi_1,\xi_2,...) \in \partial \|x\|_{\1}$ can be made explicit as
$$ \xi_k \, = \,  \begin{cases} \quad 1 \quad \; \qquad \quad \mbox{if} \qquad x_k>0,\\ \in [-1,1] \qquad \mbox{if} \qquad x_k=0, \\\;\; -1 \qquad \quad \quad \mbox{if} \qquad x_k<0, \end{cases} \qquad k \in \N. $$
So we have $\xi \notin c_0$ by Assumption~\ref{ass:nonsparse}. Moreover, by this assumption we also have $|\xi_{k_n}|=1$ for all $n \in \N$, that is $\limsup \limits
_{k \to \infty}|\xi_k|=1$. Lemma~\ref{lem:jens} thus shows $ d_\xi(R)\ge 1$ for all $R > 0$. This means that
$d_\xi(R)$ does not tend to zero as $R \to \infty$ and, in particular, that $\xi \notin \range(A^*)$ as a direct
consequence of $\xi \notin c_0$, since this would imply $d_\xi(R)=0$ for sufficiently large $R>0$.
\end{proof}

\begin{remark} \label{rem:doblestar} {\rm
Since (\ref{eq:splimit}) implies that
$d_\xi(R) \not \to 0$ as $R \to \infty$ and (\ref{eq:limit}) fails, this is an example for the case that the biadjoint operator $A^{**}$ is not injective although $A$ is injective, where we have $A^{**}: (\3)^* \to Y^{**}$ here. } \hfill\fbox{}
\end{remark}

\section{Derivation of a variational inequality} \label{s5}
\setcounter{equation}{0}
\setcounter{theorem}{0}

As outlined in the previous section source conditions and even approximate
source conditions do not hold if the searched for solution $x^\dagger \in \1$ to equation (\ref{eq:opeq})
is not sparse. In the following we derive a variational inequality as
introduced in Assumption~\ref{ass:vi} with $\M=X=\1$ and $\beta=1$ in the case of a non-sparse solution.
By Proposition~\ref{prop:basic} we then directly obtain a convergence rate. Since the index function $\varphi$
constructed below is not explicitly available for choosing $\alpha>0$, a posteriori choices of the regularization
parameter are to be preferred, and in particular the strong discrepancy principle (\ref{eq:strdis}) ensures
the convergence rate
$$ \|\xad-\xdag\|_{\1}=\mathcal{O} (\varphi(\delta)) \qquad \mbox{as} \qquad\delta \to 0. $$

We need the following lemma.

\begin{lemma}\label{lem:sums}
For all $x\in\1$ and all $n\in\N$ the inequality
$$\|x-\xdag\|_{\1}-\|x\|_{\1}+\|\xdag\|_{\1}
\leq 2\left(\sum_{k=n+1}^\infty\vert\xdag_k\vert+\sum_{k=1}^n\vert x_k-\xdag_k\vert\right)$$
is true.
\end{lemma}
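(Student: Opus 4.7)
The plan is to reduce everything to a pointwise (componentwise) estimate and then split the sum at index $n$. The key observation is that the $\ell^1$-norm is a sum over components, so the left-hand side can be rewritten as
$$\|x-\xdag\|_{\1}-\|x\|_{\1}+\|\xdag\|_{\1}
= \sum_{k=1}^\infty \bigl(|x_k-\xdag_k| - |x_k| + |\xdag_k|\bigr),$$
where convergence of all three series is guaranteed by $x,\xdag\in\1$. Each summand is nonnegative by the triangle inequality $|\xdag_k| \le |x_k| + |x_k-\xdag_k|$, so rearrangements are harmless.

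Next I would establish two complementary pointwise bounds using the triangle inequality applied in two different ways: first, from $|x_k-\xdag_k| \le |x_k| + |\xdag_k|$ we obtain
$$|x_k-\xdag_k| - |x_k| + |\xdag_k| \le 2|\xdag_k|,$$
and second, from $|\xdag_k| \le |x_k| + |x_k-\xdag_k|$ we obtain
$$|x_k-\xdag_k| - |x_k| + |\xdag_k| \le 2|x_k-\xdag_k|.$$
Each summand therefore admits two upper bounds, and I get to choose whichever is more convenient for each index.

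The natural choice is to use the second bound for the "small-index" part $k=1,\dots,n$ (where we have explicit control of the difference $x_k-\xdag_k$) and the first bound for the "tail" $k\ge n+1$ (where the tail of $\xdag$ is small). Splitting the sum accordingly gives
$$\sum_{k=1}^\infty \bigl(|x_k-\xdag_k| - |x_k| + |\xdag_k|\bigr)
\le \sum_{k=1}^n 2|x_k-\xdag_k| + \sum_{k=n+1}^\infty 2|\xdag_k|,$$
which is exactly the claimed inequality. There is no real obstacle here; the only thing to watch is justifying the rearrangement of the three series on the left into a single series of nonnegative terms, which is routine since all three are absolutely convergent.
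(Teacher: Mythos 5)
Your proof is correct and is essentially the componentwise rendition of the paper's own argument, which performs the same split at index $n$ via the projections $P_nx=(x_1,\ldots,x_n,0,\ldots)$ and applies the same two triangle inequalities to the head and tail parts. (One cosmetic slip: nonnegativity of the summands follows from $|x_k|\le|x_k-\xdag_k|+|\xdag_k|$, not from the inequality you cite; but this point is not even needed, since absolute convergence of the three series already justifies combining them termwise.)
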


\begin{proof}
For $n\in\N$ define the projection $P_n:\1\rightarrow\1$ by $P_nx:=(x_1,\ldots,x_n,0,\ldots)$.
Then obviously
$$\|x\|_{\1}=\|P_nx\|_{\1}+\|(I-P_n)x\|_{\1}$$
for all $x\in\1$.
Based on this equality we see that
\begin{align*}
\lefteqn{\|x-\xdag\|_{\1}-\|x\|_{\1}+\|\xdag\|_{\1}}\\
&\qquad=\|P_n(x-\xdag)\|_{\1}+\|(I-P_n)\xdag\|_{\1}+\|(I-P_n)(x-\xdag)\|_{\1}\\
&\qquad\qquad-\|(I-P_n)x\|_{\1}+\|P_n\xdag\|_{\1}-\|P_n x\|_{\1}.
\end{align*}
Consequently, together with the triangle inequalities
$$\|(I-P_n)(x-\xdag)\|_{\1}\leq\|(I-P_n)x\|_{\1}+\|(I-P_n)\xdag\|_{\1}$$
and
$$\|P_n\xdag\|_{\1}\leq\|P_n(x-\xdag)\|_{\1}+\|P_n x\|_{\1}$$
we obtain
$$\|x-\xdag\|_{\1}-\|x\|_{\1}+\|\xdag\|_{\1}
\leq2\left(\|P_n(x-\xdag)\|_{\1}+\|(I-P_n)\xdag\|_{\1}\right).$$
\end{proof}

\begin{theorem} \label{thm:main}
The variational inequality \eqref{eq:vi} holds true with $\beta=1$, $\M=X=\1$,
$E(x,x^\dagger)=\|x-x^\dagger\|_{\1}$, $\Omega(x)=\|x\|_{\1}$, and
\begin{equation} \label{eq:main}
\varphi(t)=2\inf_{n\in\mathbb{N}}\left(\sum_{k=n+1}^\infty\vert x^\dagger_k\vert
+t\sum_{k=1}^n\|f_k\|_{Y^*}\right).
\end{equation}
The function $\varphi$ is a concave index function.
\end{theorem}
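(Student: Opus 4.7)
The plan is to combine Lemma~\ref{lem:sums} with the dual representation of coefficients from Assumption~\ref{ass:basic}(c), and then recognize that the right-hand side takes the form $\|x\|_{\1}-\|\xdag\|_{\1}+\varphi(\|A(x-\xdag)\|_Y)$ for the $\varphi$ stated in the theorem.

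First I would fix $x \in \1$ and start from the conclusion of Lemma~\ref{lem:sums}, which gives, for every $n\in\N$,
\[
\|x-\xdag\|_{\1}-\|x\|_{\1}+\|\xdag\|_{\1}
\leq 2\sum_{k=n+1}^\infty|\xdag_k|+2\sum_{k=1}^n|x_k-\xdag_k|.
\]
By Assumption~\ref{ass:basic}(c), for each $k$ we have $x_k-\xdag_k = \langle f_k, A(x-\xdag)\rangle_{Y^*\times Y}$, so $|x_k-\xdag_k|\leq\|f_k\|_{Y^*}\|A(x-\xdag)\|_Y$. Substituting into the finite sum and setting $t:=\|A(x-\xdag)\|_Y$ yields
\[
\|x-\xdag\|_{\1}-\|x\|_{\1}+\|\xdag\|_{\1}
\leq 2\left(\sum_{k=n+1}^\infty|\xdag_k|+t\sum_{k=1}^n\|f_k\|_{Y^*}\right),
\]
valid for all $n\in\N$. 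Taking the infimum over $n$ on the right produces exactly $\varphi(t)$, which is the variational inequality \eqref{eq:vi} with $\beta=1$, $\M=\1$, $E(x,\xdag)=\|x-\xdag\|_{\1}$ and $\Omega=\|\cdot\|_{\1}$.

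It remains to verify that the function $\varphi$ in \eqref{eq:main} is a concave index function. Writing $R_n:=\sum_{k=n+1}^\infty|\xdag_k|$ and $S_n:=\sum_{k=1}^n\|f_k\|_{Y^*}$, each map $t\mapsto 2(R_n+tS_n)$ is affine and non-decreasing, hence $\varphi$ is concave and non-decreasing as a pointwise infimum of such maps. For the limit $\varphi(0+)=0$: since $\xdag\in\1$, given $\varepsilon>0$ choose $n$ with $R_n<\varepsilon/4$, then for $t$ small enough $tS_n<\varepsilon/4$ so $\varphi(t)<\varepsilon$. Concavity on $[0,\infty)$ together with right-continuity at $0$ gives continuity on the whole interval.

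The only slightly delicate point is \emph{strict} monotonicity. Here I would use the lower bound $\|f_k\|_{Y^*}\geq 1/\|A\|$ from \eqref{eq:fk_lower}, which gives $S_n\geq\|f_1\|_{Y^*}>0$ uniformly in $n\geq 1$. For $0\leq t_1<t_2$ pick a sequence $n_j$ with $A_{n_j}(t_2):=2(R_{n_j}+t_2S_{n_j})\to\varphi(t_2)$; then
\[
\varphi(t_1)\leq A_{n_j}(t_1)=A_{n_j}(t_2)-2(t_2-t_1)S_{n_j}\leq A_{n_j}(t_2)-2(t_2-t_1)\|f_1\|_{Y^*},
\]
and passing to the limit in $j$ yields $\varphi(t_1)\leq\varphi(t_2)-2(t_2-t_1)\|f_1\|_{Y^*}<\varphi(t_2)$. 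Thus $\varphi$ is strictly increasing. This uniform positivity of $S_{n_j}$ is the key subtlety, since without it the infimum of strictly increasing affine maps need not itself be strictly increasing; Assumption~\ref{ass:basic}(c) (via \eqref{eq:fk_lower}) makes the argument work and completes the proof.
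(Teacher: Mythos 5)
Your proof is correct and, for the main assertion, identical to the paper's: Lemma~\ref{lem:sums} plus the coefficient estimate $|x_k-x^\dagger_k|\leq\|f_k\|_{Y^*}\|A(x-x^\dagger)\|_Y$ from Assumption~\ref{ass:basic}(c), followed by the infimum over $n$. The only divergence is in the strict-monotonicity step for $\varphi$: the paper uses \eqref{eq:fk_infty} to argue that the infimum defining $\varphi(t_2)$ is attained at some finite $n_2$ and then compares the same affine function at $t_1$ and $t_2$, whereas you use the uniform slope bound $\sum_{k=1}^{n}\|f_k\|_{Y^*}\geq\|f_1\|_{Y^*}>0$ from \eqref{eq:fk_lower} together with a minimizing sequence; both arguments are valid, and yours has the minor advantage of not needing attainment (and incidentally yields the quantitative bound $\varphi(t_2)-\varphi(t_1)\geq 2\|f_1\|_{Y^*}(t_2-t_1)$).
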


\begin{proof}
By Assumption~\ref{ass:basic}(c) we have
$$\sum_{k=1}^n\vert x_k-x^\dagger_k\vert
=\sum_{k=1}^n\langle e_k,x-x^\dagger\rangle_{\3\times\1}
\leq\sum_{k=1}^n\|f_k\|_{Y^\ast}\|A(x-x^\dagger)\|_Y$$
for all $n\in\N$ and with the help of Lemma~\ref{lem:sums} we obtain
$$\|x-\xdag\|_{\1}-\|x\|_{\1}+\|\xdag\|_{\1}
\leq 2\left(\sum_{k=n+1}^\infty\vert\xdag_k\vert+\|A(x-x^\dagger)\|_Y\sum_{k=1}^n\|f_k\|_{Y^\ast}\right)$$
for all $n\in\N$.
Taking the infimum over $n$ on the right-hand side yields the desired
variational inequality.
\par
It remains to show that $\varphi$ is a \emph{concave index function}.
As an infimum of affine functions
$\varphi$ is concave and upper semi-continuous. Since $\varphi(t)<\infty$ for $t\in[0,\infty)$,
$\varphi$ is even continuous on $(0,\infty)$. Together with $\varphi(0)=0$
the upper semi-continuity in $t=0$ yields $\varphi(t)\to 0$ if $t\to+0$,
that is, continuity in $t=0$.
To show that $\varphi$ is strictly increasing take $t_1,t_2\in[0,\infty)$
with $t_1<t_2$. Since by \eqref{eq:fk_infty} the infimum in the definition
of $\varphi(t_2)$ is attained at some $n_2\in\N$, we obtain
$$\varphi(t_1)\leq 2\left(\sum_{k=n_2+1}^\infty\vert x^\dagger_k\vert
+t_1\sum_{k=1}^{n_2}\|f_k\|_{Y^*}\right)
<\varphi(t_2).$$
\end{proof}

\begin{example}[H\"older rates] \label{ex:power}
{\rm As one sees the rate function $\varphi$ in Theorem~\ref{thm:main} depends on decay properties of the solution components $|\xdag_k|$ for $k \to \infty$ and on the growth properties of the finite sums $\sum \limits_{k=1}^n \|f_k\|_{Y^*}$ for $n \to \infty$. If decay and growth are of monomial type as
\begin{equation} \label{eq:power}
\sum \limits_{k=n+1}^\infty |\xdag_k|\le K_1\, n^{-\mu},  \qquad \sum \limits_{k=1}^n \|f_k\|_{Y^*} \le K_2\, n^\nu, \qquad \mu,\nu>0,
\end{equation}
with some constants $K_1,K_2>0$,
then we have from Theorem~\ref{thm:main} (see formula (\ref{eq:main})) and Proposition~~\ref{prop:basic} that the strong discrepancy principle for choosing the regularization parameter $\alpha>0$ yields the \emph{H\"older convergence rates}
\begin{equation} \label{eq:hoelder}
\|\xad-\xdag\|_{\1} = \mathcal{O} \left(\delta^\frac{\mu}{\mu+\nu}\right) \qquad \mbox{as} \qquad\delta \to 0.
\end{equation}
Whenever exponents $\hat \mu>1,\,\hat \nu>0,$ and constants $\hat K_1,\hat K_2>0$ exist such that
$$ |\xdag_k|\le \hat K_1\, n^{-\hat \mu},  \qquad \|f_k\|_{Y^*} \le \hat K_2\, n^{\hat\nu}, \qquad \mbox{for all} \quad k \in \N,$$
the rate result (\ref{eq:hoelder}) can be rewritten as
$$\|\xad-\xdag\|_{\1} = \mathcal{O} \left(\delta^\frac{\hat\mu-1}{\hat\mu+ \hat\nu}\right) \qquad \mbox{as} \qquad\delta \to 0.$$
In particular, for the compact operator $A$ in the diagonal case of Example~\ref{ex:diag}, the exponent $\hat \nu>0$ can be interpreted as the degree of ill-posedness expressed by the operator $A$ if the decay rate
 of the singular values of $A$ is of the form $\sigma_k \sim k^{-\hat \nu}$.}
\end{example}

\begin{remark} {\rm
If $x^\dagger$ is sparse and if $\bar{n}\in\N$ is the largest $n$ for which
$x^\dagger_n\neq 0$, then the theorem yields a variational inequality with
$$\varphi(t)\leq 2\left(\sum_{k=1}^{\bar{n}}\|f_k\|_{Y^*}\right)t.$$
Consequently the $\mathcal{O}$-constant in the corresponding convergence
rate $\|x_\alpha^\delta-x^\dagger\|_{\1}=\mathcal{O}(\delta)$
depends on the size of the support of $x^\dagger$. }\hfill\fbox{}
\end{remark}

Theorem~\ref{thm:main} yields a function $\varphi$ and a corresponding
variational inequality \eqref{eq:vi} for arbitrary $x^\dagger\in X=\1$.
But as a consequence of Assumption~\ref{ass:basic} (c) the proof of Theorem~\ref{thm:main}
only works for injective operators $A$. The converse assertion would be that
if for each $\bar{x}\in X$ there is some index function $\varphi$ such that
the variational inequality
$$\|x-\bar{x}\|_{\1}\leq\|x\|_{\1}-\|\bar{x}\|_{\1}+\varphi\bigl(\|A(x-\bar{x})\|_Y\bigr)\quad\text{for all}\quad x\in X$$
is true, then $A$ is injective.
The following proposition states that even the existence of only one such $\bar{x}$
is sufficient for injectivity of $A$.

\begin{proposition}
If there is some $\bar{x}\in \1$ with $\bar{x}_k\neq 0$ for all $k\in\N$ such that
the variational inequality
\begin{equation}\label{eq:vi_tilde}
\beta\|x-\bar{x}\|_{\1}\leq\|x\|_{\1}-\|\bar{x}\|_{\1}+\varphi\bigl(\|A(x-\bar{x})\|_Y\bigr)\quad\text{for all}\quad x\in \1
\end{equation}
is satisfied with $\beta>0$ and an arbitrary index function $\varphi$, then $A$ is injective.
\end{proposition}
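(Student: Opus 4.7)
The plan is to argue by contradiction: assume $A$ is \emph{not} injective and pick $h\in\1$ with $h\neq 0$ and $Ah=0$. The idea is to plug the one-parameter family $x=\bar x+th$, $t\in\R$, into \eqref{eq:vi_tilde} and exploit the fact that the $\ell^1$-norm is G\^ateaux differentiable at $\bar x$, which is precisely what the hypothesis $\bar x_k\neq 0$ for all $k$ provides. Since $A(x-\bar x)=0$ identically in $t$ and $\varphi$ is an index function with $\lim_{s\to 0^+}\varphi(s)=0$, the right-hand side of \eqref{eq:vi_tilde} collapses to $\|\bar x+th\|_{\1}-\|\bar x\|_{\1}$, so the inequality reads
$$\beta\,|t|\,\|h\|_{\1}\le\|\bar x+th\|_{\1}-\|\bar x\|_{\1}\qquad\text{for all}\quad t\in\R.$$

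The second step is to identify the directional derivative of the norm at $\bar x$. For each $k$, because $\bar x_k\neq 0$, the scalar limit $(|\bar x_k+th_k|-|\bar x_k|)/t\to\mathrm{sign}(\bar x_k)\,h_k$ as $t\to 0$ holds, and the difference quotients are bounded in modulus by $|h_k|$ thanks to the reverse triangle inequality. Setting $\xi:=(\mathrm{sign}\,\bar x_k)_{k\in\N}\in\3$, dominated convergence then gives
$$\lim_{t\to 0}\frac{\|\bar x+th\|_{\1}-\|\bar x\|_{\1}}{t}=\sum_{k=1}^\infty\mathrm{sign}(\bar x_k)\,h_k=\langle\xi,h\rangle_{\3\times\1}.$$
Dividing the displayed variational inequality by $t$ and letting $t\to 0^+$ yields $\beta\|h\|_{\1}\le\langle\xi,h\rangle_{\3\times\1}$; dividing by $|t|=-t$ and letting $t\to 0^-$ yields the complementary bound $\beta\|h\|_{\1}\le-\langle\xi,h\rangle_{\3\times\1}$. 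Adding the two produces $2\beta\|h\|_{\1}\le 0$, which contradicts $\beta>0$ and $\|h\|_{\1}>0$; hence no such $h$ exists and $A$ is injective.

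I expect the only delicate point to be the justification of the two-sided G\^ateaux derivative of the $\ell^1$-norm at $\bar x$; this is exactly where the assumption that \emph{every} component $\bar x_k$ is nonzero becomes indispensable. A vanishing coordinate would contribute only a one-sided subgradient value in $[-1,1]$ at that index, and the two limiting inequalities above would then fail to cancel into a nontrivial contradiction. Everything else is bookkeeping: substitution into \eqref{eq:vi_tilde}, the continuity extension $\varphi(0)=0$, and a routine application of dominated convergence.
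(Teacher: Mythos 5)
Your proof is correct, but it takes a genuinely different route from the one in the paper. You exploit the fact that $\|\cdot\|_{\1}$ is G\^ateaux differentiable at $\bar x$ (because every coordinate $\bar x_k$ is nonzero, the subdifferential collapses to the single element $\xi=(\mathrm{sign}\,\bar x_k)_{k\in\N}$), compute the two-sided derivative along a null-space direction $h$ by dominated convergence, and let the two one-sided versions of the inequality cancel. The paper instead argues without any limit process: for $x\in\nullspace(A)\setminus\{0\}$ it sets $h(x):=\beta\|x\|_{\1}-\|\bar x+x\|_{\1}+\|\bar x\|_{\1}\le 0$, splits $\N$ into the index sets $I_1(t)=\{k:\, t|x_k|\le|\bar x_k|\}$ and $I_2(t)=\{k:\, t|x_k|>|\bar x_k|\}$, uses the elementary identity $|\bar x_k-tx_k|+|\bar x_k+tx_k|=2\max(|\bar x_k|,t|x_k|)$ to bound $h(-tx)$ from below, and then exhibits an explicit $\bar t>0$ (a minimum of finitely many ratios $|\bar x_k|/|x_k|$, which is where $\bar x_k\ne 0$ enters there) for which $h(-\bar t x)>0$, a contradiction. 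Your argument is shorter and isolates the real mechanism --- the variational inequality forces $\langle\xi,\pm h\rangle_{\3\times\1}\ge\beta\|h\|_{\1}$ for the unique subgradient $\xi$ at $\bar x$, which is impossible for $h\ne 0$ --- at the price of a dominated-convergence step and the convention $\varphi(0)=0$ (which the paper also uses implicitly when it writes $\varphi(\|Ax\|_Y)=0$ for $x$ in the null space). The paper's proof is more elementary and purely combinatorial. Both use the hypothesis $\bar x_k\ne 0$ for all $k$ in an essential way, and your closing remark about why a vanishing coordinate would destroy the cancellation is accurate.
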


\begin{proof}
For $x\in\nullspace(A)\setminus\{0\}$, where $\nullspace(A)$ denotes the null-space of $A$, the
variational inequality (\ref{eq:vi_tilde}) applied to $\bar{x}+x$ yields
$$h(x):=\beta\|x\|_{\1}-\|\bar{x}+x\|_{\1}+\|\bar{x}\|_{\1}
\leq\varphi(\|Ax\|_Y)=0.$$
If there is some $t>0$ such that $h(tx)>0$ we have a contradiction to
$tx\in\nullspace(A)$. Thus, we can assume $h(tx)\leq 0$ for all $t>0$.
\par
Define two index sets
\begin{align*}
I_1(t)&:=\{k\in\N: t\vert x_k\vert\leq\vert \bar{x}_k\vert\},\\
I_2(t)&:=\{k\in\N: t\vert x_k\vert>\vert \bar{x}_k\vert\},
\end{align*}
where $t>0$. Simple calculations then show that
$$\vert \bar{x}_k-tx_k\vert+\vert \bar{x}_k+tx_k\vert
=\begin{cases}2\vert \bar{x}_k\vert,&k\in I_1(t),\\
2t\vert x_k\vert,&k\in I_2(t),\end{cases}$$
for all $k\in\N$. Thus we have for all $t>0$  
\begin{align*}
h(-tx)
&=2\beta\|tx\|_{\1}-\|\bar{x}-tx\|_{\1}-\|\bar{x}+tx\|_{\1}
+2\|\bar{x}\|_{\1}-h(tx)\\
&\geq 2\beta\|tx\|_{\1}-\|\bar{x}-tx\|_{\1}-\|\bar{x}+tx\|_{\1}
+2\|\bar{x}\|_{\1}\\
&=2\left(\|\bar{x}\|_{\1}-\sum_{k\in I_1(t)}\vert \bar{x}_k\vert\right)
+2t\left(\beta\|x\|_{\1}-\sum_{k\in I_2(t)}\vert x_k\vert\right).
\end{align*}
\par
Now choose $n\in\N$ such that there is some $k_0\in\{1,\ldots,n\}$
with $x_{k_0}\neq 0$ and such that
$$\sum \limits_{k=n+1}^\infty\vert x_k\vert<\beta\|x\|_{\1}.$$
Set
$$\bar{t}:=\min\left\{\frac{\vert \bar{x}_k\vert}{\vert x_k\vert}:k\in\{1,\ldots,n\},\;x_k\neq 0\right\}.$$
Then
$$\sum_{k\in I_2(\bar{t})}\vert x_k\vert\leq\sum_{k=n+1}^\infty\vert x_k\vert<\beta\|x\|_{\1}$$
and therefore $h(-\bar{t}x)>0$, which contradicts $-\bar{t}x\in\nullspace(A)$.
\par
In all cases the assumption $x\in\nullspace(A)\setminus\{0\}$ led to a
contradiction. Thus, $\nullspace(A)=\{0\}$ is proven.
\end{proof}

\section{Conclusion}

We have shown that, in the case of $\ell^1$-regularization, variational inqualities can significantly increase the range of solutions for which convergence rates can be shown compared to source conditions. Of course, the results are still preliminary since they rely on the injectivity of the operator $A$, which is an unwanted feature in many setups, in particular if motivated by compressed sensing. However, it provides an interesting insight into the current borders of regularization theory and a strong motivation to study variational inequality approaches in particular for singular regularization functionals.

Thinking about potential extensions of the approach and weakening of the assumptions one observes that currently several steps are based on the choice of "minimal" subgradients, i.e. the entries of the subgradient are set to zero outside the support of the solution.
From a source condition perspective, it can be seen as the assumption that for all one-sparse solutions a very strong source condition is satisfied by the minimal subgradient. A feasible approach to extend the results of this paper might be to consider larger classes of subgradients, whose absolute value should however be bounded away from one or decay to zero outside the support. The exact type of needed condition remains to be determined in the future.

\section*{Acknowledgments}

This work was supported by the German Science Foundation (DFG) under
grants \linebreak BU~2327/6-1 (M.~Burger) and HO~1454/8-1 (B.~Hofmann).
We express our thanks to Peter Math\'e (Berlin) for important hints concerning projections and their properties in $\1$, which allowed us to formulate and prove Lemma~\ref{lem:sums}, and to Radu Ioan Bo\c{t} (Chemnitz) for detailed remarks that essentially helped to improve the paper.



\end{document}